\documentclass{amsart}
\usepackage{a4wide,amsmath,amssymb}
\usepackage[toc,page]{appendix}
\usepackage{hyperref}

\usepackage{url}

\usepackage{mathtools}

\usepackage{xcolor}

\newtheorem{thm}{Theorem}[section]

\theoremstyle{remark}
\newtheorem{rem}[thm]{Remark}
\newtheorem{opp}[thm]{Open problem}

\theoremstyle{definition}

\newtheoremstyle{Claim}{}{}{\itshape}{}{\itshape\bfseries}{:}{ }{#1}
\theoremstyle{Claim}

\newcommand{\T}{{\mathbb{T}}}

\newcommand{\R}{\mathbb{R}}

\newcommand{\eps}{\varepsilon}

\makeatletter
\@namedef{subjclassname@2020}{\textup{2020} Mathematics Subject Classification}
\makeatother

\theoremstyle{plain}

\begin{document}

\title[]{Quantitative maximal $L^2$-regularity for viscous Hamilton-Jacobi PDEs in 2D and Mean Field Games}

\author{Alessandro Goffi}
\address{Dipartimento di Matematica e Informatica ``Ulisse Dini'', Universit\`a degli Studi di Firenze, 
viale G. Morgagni 67/a, 50134 Firenze (Italy)}
\curraddr{}
\email{alessandro.goffi@unifi.it}

 \thanks{The author is member of the Gruppo Nazionale per l'Analisi Matematica, la Probabilit\`a e le loro Applicazioni (GNAMPA) of the Istituto Nazionale di Alta Matematica (INdAM) and he is partially supported by INdAM-GNAMPA 2026 Project \emph{Processi di diffusione non-lineari: regolarit\`a e classificazione delle soluzioni}
 }
\date{\today}

\subjclass[2020]{35B65,35Q89,35J61}

\keywords{Viscous Hamilton-Jacobi equations, maximal $L^2$-regularity, Mean Field Games with local coupling}

\begin{abstract}
We discuss quantitative Calder\'on-Zygmund estimates in $W^{2,2}$ for 2D viscous Hamil\-ton-Jacobi equations with natural growth in the gradient. We apply the result to obtain the existence of classical solutions for stationary second order Mean Field Games systems in 2D with (defocusing) coupling behaving like $m^\alpha$ for any $\alpha>0$. We also survey on the known results for the regularity of viscous Hamilton-Jacobi equations and second order Mean Field Games and list several open problems.
\end{abstract}

\date{\today}
\maketitle

\section{Introduction}
This note is concerned with nonlinear maximal $L^q$-regularity estimates for equations of the form
\begin{equation}\label{eqHJ}
-\Delta u(x)+|\nabla u(x)|^\gamma=f(x)\text{ in }\Omega\subseteq\R^n,\text{ where }f\in L^q(\Omega) \text{ and }\gamma>1,
\end{equation}
namely
\[
\int_\Omega |D^2u|^q+\int_\Omega |\nabla u|^{\gamma q}\leq C\left(n,q,\gamma,\Omega,\int_\Omega |f|^q\right)\text{ + lower order terms}.
\]
Usually we have poor control of the constant $C$, primarily because of the coercivity of the nonlinearity and the methods of proofs involving either the Bernstein technique (possibly combined with interpolation arguments and duality methods) \cite{CGARMA,CGannpde,GoffiCCM,CGL}, compactness techniques \cite{CirantVerzini,CirantJFA,CirantWei} or the use of the De Giorgi-Nash-Moser/Krylov-Safonov theory \cite{GoffiJFA}. Nonetheless, we observe that for 2D equations with natural growth in the gradient $\gamma=2$ and $q=2$, an approach based on the sole integration by parts provides an accurate evaluation of the constant $C$ of the estimate. This is in line with the treatment of linear boundary value problems, see Chapter 3 of the monograph \cite{Grisvard}. This is done in Theorem \ref{main1}, and the idea of the proof goes back to P.-L. Lions \cite{Napoli}.
We apply this result and some recent estimates obtained in \cite{GoffiJFA} to the regularity theory of stationary Mean Field Games (MFGs) of defocusing type involving coupled second order Hamilton-Jacobi (HJ) and Fokker-Planck (FP) equations on compact manifolds $M\subset\R^2$ without boundary
\begin{equation*}
\begin{cases}
-\Delta u+H(x,\nabla u)+\lambda=m^\alpha&\text{ in }M\subset\R^2,\\
-\Delta m-\mathrm{div}(D_pH(x,\nabla u)m)=0&\text{ in }M\subset\R^2,\\
m>0,\quad \int_{M}m\,dxdy=1,
\end{cases}
\end{equation*}
introduced by J.-M. Lasry and P.-L. Lions \cite{ll}. For these 2D systems with $H$ having natural growth, we show the existence of smooth solutions $(u,\lambda,m)$ \textit{for any $\alpha>0$}; see Theorem \ref{main3}. This smoothness property has been known for a long time by the experts in the community, cf. \cite{LioMinnesota} and the introduction in \cite{Gomessub}, but it has not appeared in print anywhere. We recall that in dimension $n\geq3$ one needs in general certain restrictions on the parameter $\alpha$. The idea of the proof is the following: by second order estimates \cite{Lions2009,Gomesbook,GoffiPediconi} and the 2D Sobolev inequality we get $m^\alpha\in L^q$ for any $q\geq 2$. Then
\begin{align*}
&m^\alpha\in L^2\underset{\text{HJ}}{\overset{\text{maximal reg.}}{\implies}} D^2u,|\nabla u|^2\in L^2\implies -D_pH(x,\nabla u)\in L^r,\ r>2\\
&\underset{\text{FP}}{\overset{\text{linear reg.}}{\implies}} m^\alpha \in C^{\beta}\underset{\text{HJ}}{\overset{\text{Schauder}}{\implies}} u\in C^{2,\beta}\implies -D_pH(x,\nabla u)\in C^{1,\beta}\underset{\text{FP}}{\overset{\text{Schauder}}{\implies}} m\in C^{2,\beta}.
\end{align*}

 Along the way, we provide a short survey of the most recent results about the regularity theory of second order MFGs, pointing out several open problems. A reference for this topic is \cite{Gomesbook}. We do not address here the regularity of first-order MFGs, see \cite{MunozAIHP} for recent developments.\\
 
\textit{Notation}: $u_x,u_y$ will denote the partial derivatives of the unknown function $u=u(x,y)$ in 2D, while $\nabla,D^2$ will denote the gradient and Hessian operators respectively for $n\geq2$. From now on, $\gamma'=\frac{\gamma}{\gamma-1}$ for $\gamma>1$. For the sake of brevity, we will not display the $x$-dependence of $u,\nabla u, D^2u$ and we will sometimes drop the $n$-dimensional Lebesgue measure under the integral sign.

\section{Strong maximal regularity for viscous Hamilton-Jacobi equations in 2D}
Consider
\begin{equation}\label{HJ2d+}
-\Delta u+|\nabla u|^2=f(x)\text{ in }M\subset\R^2,
\end{equation}
or 
\begin{equation}\label{HJ2d-}
-\Delta u-|\nabla u|^2=f(x)\text{ in }M\subset\R^2,
\end{equation}
where $M$ is a compact manifold without boundary. We prove the following Calder\'on-Zygmund estimate via integration by parts, without differentiating the PDE, as it happens for the classical $W^{2,2}$ regularity theory of linear problems \cite[Chapter 3]{Grisvard}. Earlier $W^{2,2}$ results for more general equations of the form $-\mathrm{Tr}(A(x,u,Du)D^2u)+B(x,u,Du)=0$ in two independent variables appeared in \cite[p. 381]{LUbook}: in this case $B$ has quadratic growth and the estimates were obtained under smallness assumptions on $\|u\|_\infty$ via interpolation arguments. 

\begin{thm}\label{main1}
Let $u\in H^1(M)\cap L^\infty(M)$ be a weak solution of \eqref{HJ2d+} or \eqref{HJ2d-} with $f\in L^2(M)$. Then $u\in W^{2,2}(M)$ and the following quantitative regularity estimate holds
\[
\|D^2u\|_{L^2(M)}^2+\||\nabla u|^2\|_{L^2(M)}^2\leq 3\|f\|_{L^2(M)}^2.
\]
\end{thm}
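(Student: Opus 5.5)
The plan is to square the equation and integrate, then control the cross term $\mp 2\int_M \Delta u\,|\nabla u|^2$ with an identity that holds only in two dimensions. I first establish qualitative regularity $u\in W^{2,2}(M)$, so that all integrations by parts are legitimate. Throughout, $M$ is assumed flat (e.g. the torus $\T^2$), as implicit in the statement, so that no curvature term appears in the Bochner identity.

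\emph{Step 1: qualitative regularity via Cole--Hopf.} Set $v=e^{\mp u}$, with the sign chosen so that \eqref{HJ2d+} or \eqref{HJ2d-} becomes the linear equation $-\Delta v \pm f v=0$ in the weak sense. Since $u\in H^1\cap L^\infty$, the chain rule gives $v\in H^1\cap L^\infty$ and $v\geq c>0$. Then $\Delta v=\pm f v\in L^2(M)$, and standard $L^2$ elliptic regularity on the compact manifold yields $v\in W^{2,2}(M)$. Hence $u=\mp\log v\in W^{2,2}(M)$, because $v$ is bounded away from zero. In 2D we also have $W^{2,2}\subset W^{1,4}$, so $|\nabla u|^2\in L^2$. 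This justifies all the identities below by density of smooth functions in $W^{2,2}$.

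\emph{Step 2: the algebraic identity.} Squaring the equation gives
\[
\int_M f^2=L+Y\mp 2\int_M\Delta u\,|\nabla u|^2,\qquad L=\int_M(\Delta u)^2,\quad Y=\int_M|\nabla u|^4 .
\]
Integrating by parts gives $\int_M\Delta u|\nabla u|^2=-2\int_M D^2u\nabla u\cdot\nabla u$. Decompose $D^2u=\tfrac{\Delta u}{2}I+A$ with $A$ symmetric and traceless (this is where $n=2$ enters). Substituting, I obtain
\[
\int_M\Delta u|\nabla u|^2=-\int_M\Delta u|\nabla u|^2-2\int_M A\nabla u\cdot\nabla u,
\qquad\text{so}\qquad \int_M\Delta u\,|\nabla u|^2=-\int_M A\nabla u\cdot\nabla u .
\]

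\emph{Step 3: estimate the cross term.} A traceless symmetric $2\times2$ matrix has eigenvalues $\pm a$, so $|A|^2=2a^2$ and $|A\xi\cdot\xi|\le |A||\xi|^2/\sqrt2$. Moreover $|D^2u|^2=\tfrac12(\Delta u)^2+|A|^2$. On a flat manifold, two integrations by parts give $\int_M|D^2u|^2=L$, hence $\int_M|A|^2=L/2$. By Cauchy--Schwarz and then Young,
\[
\Big|2\int_M\Delta u|\nabla u|^2\Big|\le \sqrt2\Big(\frac L2\Big)^{1/2}Y^{1/2}=\sqrt{LY}\le \frac{L+Y}{2}.
\]
Therefore $\int_M f^2\ge \tfrac12(L+Y)$, that is, $\|D^2u\|_2^2+\||\nabla u|^2\|_2^2\le 2\|f\|_2^2\le 3\|f\|_2^2$, which is the claimed estimate.

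The main obstacle is Step 1, since the estimate itself is purely algebraic once it is justified. The Cole--Hopf linearization seems the cleanest way to get the regularity without smallness assumptions. The only delicate points are checking the weak chain rule for $e^{\mp u}$ with $u\in H^1\cap L^\infty$, and checking the flatness needed for $\int_M|D^2u|^2=\int_M(\Delta u)^2$. If $M$ were curved, a Ricci term would appear in that identity and would have to be absorbed separately.
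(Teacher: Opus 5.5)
Your proof is correct. It differs from the paper's in both steps, and it gives a better constant.

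\emph{Qualitative regularity.} The paper does not argue this directly. It combines the a priori bounds of \cite{GoffiCCM} with the existence and uniqueness results of \cite{BarlesMurat,BarlesPorretta} to identify weak solutions with strong $W^{2,2}$ solutions. Your Cole--Hopf reduction ($v=e^{\mp u}$, $\Delta v=\pm fv\in L^2$, then $u=\mp\log v$) is self-contained and elementary. It uses the fact that the nonlinearity is exactly $\pm|\nabla u|^2$, whereas the cited machinery covers more general Hamiltonians; for this theorem that restriction costs nothing.

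\emph{The estimate.} The paper works in coordinates:
\begin{itemize}
\item it uses $\int u_{xx}u_x^2=\int u_{yy}u_y^2=0$ and $-2\int(u_{xx}u_y^2+u_{yy}u_x^2)=8\int u_xu_yu_{xy}$;
\item it completes the squares $(u_{xx}-u_y^2)^2+(u_{yy}-u_x^2)^2$ and throws them away;
\item it applies pointwise Young to $u_xu_yu_{xy}$ against $u_{xy}^2$ and $u_x^2u_y^2$, which forces $C_f\ge 3$.
\end{itemize}
Your trace-free splitting instead identifies the cross term exactly as $-\int_M A\nabla u\cdot\nabla u$, which agrees with the paper's $-4\int_M u_xu_yu_{xy}$. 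You then combine the sharp bound $|A\xi\cdot\xi|\le |A||\xi|^2/\sqrt2$ with $\|A\|_2^2=L/2$. This yields $\|f\|_2^2\ge L+Y-\sqrt{LY}\ge \tfrac12(L+Y)$, so you get the constant $2$, which implies the stated $3$.

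Your formulation is also coordinate-free. With $D^2u=\frac{\Delta u}{n}I+A$, the same computation gives $\|f\|_2^2\ge L+Y-\frac{4(n-1)}{n+2}\sqrt{LY}$. For $n=3$ this still produces a quantitative bound, with constant $5$; the Cole--Hopf step also goes through there, since $W^{2,2}\subset W^{1,6}$. The paper's computation, by contrast, is tied to two variables. As you note, both arguments use flatness through the identity $\int_M|D^2u|^2=\int_M(\Delta u)^2$.
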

\begin{rem}\label{quantitative}
The $W^{2,q}$ regularity result, $q>\max\left\{\frac{n}{2},2\right\}$, was proposed in \cite{Napoli,LionsSeminar} and it was proved in a non-quantitative form by \cite{CGARMA,GoffiCCM,GoffiJFA,CirantVerzini,CirantWei}. Note that \cite{CGARMA} does not cover the integrability threshold $q=2$, while \cite{GoffiCCM,GoffiJFA} reach the integrability $q=2$. Nonetheless, all the constants of the known results are implicit and depend on $\|f\|_q$, except the (maximal) gradient estimates \cite{CGL} in the case $q\geq n$ and the Lipschitz bounds of the recent paper \cite{AntoniniCianchi}, which hold for quasilinear equations with natural growth in the gradient and right-hand side in more general spaces.  A quantitative estimate as in Theorem \ref{main1} in the general case $f\in L^q$, $n\geq3$ and for general power-growth Hamiltonians remains largely open.
\end{rem}

\begin{proof}\footnote{This result and the idea of the proof was discussed by P.-L. Lions in \cite{Napoli}.}
We consider \eqref{HJ2d+}, the other case being the same. By the a priori estimates in \cite{GoffiCCM} and the existence and uniqueness statements in \cite{BarlesMurat,BarlesPorretta}, weak solutions are strong solutions and they belong to $W^{2,2}$. We show the quantitative $W^{2,2}$ estimate. Using \eqref{HJ2d+}
\begin{equation}\label{f2}
\int_{M}|f|^2\,dxdy\geq \int_{M}(-\Delta u+|\nabla u|^2)^2\,dxdy=\int_{M}[(\Delta u)^2+|\nabla u|^4-2\Delta u|\nabla u|^2]\,dxdy.
\end{equation}
Note now that integrating by parts
\[
\int_{M}u_{xx}u^2_x\,dxdy=\int_{M}u_{yy}u^2_y\,dxdy=0.
\]
Plugging these identities into \eqref{f2} we conclude
\[
\int_{M}|f|^2\geq \int_{M}[(\Delta u)^2+|\nabla u|^4-2u_{xx}u_y^2-2u_{yy}u_x^2]\,dxdy.
\]
Recalling that
\[
|D^2u|^2=u_{xx}^2+u_{yy}^2+2u_{xy}^2
\]
and integrating by parts we get
\[
\int_{M}|D^2u|^2\,dxdy=\int_{M}(u_{xx}^2+u_{yy}^2+2u_{xx}u_{yy})\,dxdy=\int_{M}(u_{xx}+u_{yy})^2\,dxdy=\int_{M}(\Delta u)^2\,dxdy.
\]
Therefore
\[
\int_{M}|f|^2\,dxdy\geq \int_{M}\left(|D^2u|^2+|\nabla u|^4-2u_{xx}u_y^2-2u_{yy}u_x^2\right)\,dxdy.
\]
To conclude, we need to find $C_f>1$ such that
\[
\int_{M}\left(|D^2u|^2+|\nabla u|^4-2u_{xx}u_y^2-2u_{yy}u_x^2\right)\,dxdy\geq \frac{1}{C_f}\int_{M}\left(|D^2u|^2+|\nabla u|^4\right)\,dxdy, 
\]
namely find $C_f>1$ such that
\[
\left(1-\frac{1}{C_f}\right)\int_{M}\left(|D^2u|^2+|\nabla u|^4-2\frac{C_f}{C_f-1}u_{xx}u_y^2-2\frac{C_f}{C_f-1}u_{yy}u_x^2\right)\,dxdy\geq0.
\]
This would imply
\[
\int_{M}\left(|D^2u|^2+|\nabla u|^4\right)\,dxdy\leq C_f\int_{M}|f|^2\,dxdy.
\]
Another integration by parts yields the identity
\[
-2\int_{M}u_{xx}u_y^2\,dxdy-2\int_{M}u_{yy}u_x^2\,dxdy=8\int_{M}u_xu_yu_{xy}\,dxdy.
\]
We write, using that
\[
|\nabla u|^4=(u_x^2+u_y^2)^2=u_x^4+u_y^4+2u_x^2u_y^2,
\]
the following identity
\begin{align*}
&\left(1-\frac{1}{C_f}\right)\int_{M}\left(|D^2u|^2+|\nabla u|^4-2\frac{C_f}{C_f-1}u_{xx}u_y^2-2\frac{C_f}{C_f-1}u_{yy}u_x^2\right)\,dxdy\\
&=\left(1-\frac{1}{C_f}\right)\int_{M}\left[(u_{xx}-u_y^2)^2+(u_{yy}-u_x^2)^2+2u_x^2u_y^2-\frac{2}{C_f-1}u_{xx}u_y^2-\frac{2}{C_f-1}u_{yy}u_x^2+2u_{xy}^2\right]\,dxdy\\
&=\left(1-\frac{1}{C_f}\right)\underbrace{\int_{M}[(u_{xx}-u_y^2)^2+(u_{yy}-u_x^2)^2]\,dxdy}_{\geq0}\\
&+\frac{4}{C_f}\int_{M}\left(2u_xu_yu_{xy}+\frac{C_f-1}{2}u_{xy}^2+ \frac{C_f-1}{2}u_x^2u_y^2\right)\,dxdy.
\end{align*}
Note that by the Young's inequality
\begin{align*}
\frac{C_f-1}{2}u_{xy}^2+\frac{C_f-1}{2}u_x^2u_y^2+2u_xu_yu_{xy}&\geq \frac{C_f-1}{2}u_{xy}^2+\frac{C_f-1}{2}u_x^2u_y^2-2\left(\frac12u_x^2u_y^2+\frac12u_{xy}^2\right)\\
&=\left(\frac{C_f-1}{2}-1\right)u_x^2u_y^2+\left(\frac{C_f-1}{2}-1\right)u_{xy}^2.
\end{align*}
The above term is nonnegative whenever $\frac{C_f-1}{2}-1\geq0$, namely $C_f\geq3$. Therefore
\[
\frac{4}{C_f}\int_{M}\left(2u_xu_yu_{xy}+\frac{C_f-1}{2}u_{xy}^2+ \frac{C_f-1}{2}u_x^2u_y^2\right)\,dxdy\geq0
\]
provided $C_f\geq3$, and the statement follows.
\end{proof}

\begin{rem}[1D HJ equations]
In the case of 1D HJ equations with periodic boundary conditions
\begin{equation}\label{HJode}
-u''(x)+H(u'(x))=f(x),\ x\in[0,L].
\end{equation} 
one can prove that for every convex $\Phi:\R\to\R$ and $\Phi\in C^1$, $H\in C^0$, we have
\[
\int_0^L \Phi(H(u'(x))\,dx\leq \int_0^L \Phi(f(x))\,dx.
\]
This was noticed in \cite{Napoli}. It follows by the convexity of $\Phi$, since
\[
\Phi(-u''+H(u'))\geq \Phi(H(u'))+\Phi'(H(u'))(-u'').
\]
Noting that for $\Psi=\Psi(u')$ we have
\[
\int_0^L \Psi(u')(-u''(x))\,dx=0,
\]
one concludes
\[
\int_0^L\Phi(f)\,dx=\int_0^L\Phi(-u''(x)+H(u'(x)))\,dx\geq \int_0^L\Phi(H(u'(x)))\,dx.
\]
\end{rem}

\begin{rem}
We can handle the case of equations with a zero-th order term $\lambda u$, $\lambda>0$, since
\[
-\Delta u+|\nabla u|^2=f\iff -\Delta u+\lambda u+|\nabla u|^2=\tilde f\text{ where }\tilde f=f+\lambda u.
\]
Whenever $f\in L^q\implies u\in L^q$, the maximal regularity estimate holds equivalently for $\lambda=0$ and $\lambda>0$. The restriction $\lambda>0$ is related to the existence of solutions, but not to a priori estimates \cite{GoffiCCM}.
\end{rem}

\begin{opp}
Many extensions are possible to problems on the whole space $\R^2$ as well as to problems with Dirichlet and Neumann boundary conditions on the line of \cite{Grisvard}.
\end{opp}

\begin{opp}
It is not clear how to extend Theorem \ref{main1} to time-dependent Hamilton-Jacobi equations $\partial_t u-\Delta u+|\nabla u|^2=f\in L^2_{x,t}$. However, if one first proves a quantitative estimate for $\partial_t u\in L^2_{x,t}$, then by freezing the time variable and applying the elliptic result we would get the statement (with a different universal constant). The problem of maximal $L^q$-regularity for time-dependent problems was addressed in \cite{cg20,CGannpde,CirantJFA,GoffiJFA}, but the validity of a strong form as in Theorem \ref{main1} remains open in general. 
\end{opp}

\begin{opp}
The validity of a quantitative estimate for the maximal regularity for \eqref{eqHJ} in the case of general power-like Hamiltonians remains unsettled, cf. Remark \ref{quantitative}.
\end{opp}

\begin{opp}
When $\Delta u$ is replaced with a quasilinear operator modeled on $\Delta_p u$, $p>1$, 
\[
-\Delta_p u+|\nabla u|^\gamma=f,
\]
even the two-dimensional case $n=2$ with natural growth $\gamma=p$ becomes more complicated: some maximal gradient estimates appeared recently in \cite{AntoniniCianchi} for general boundary value problems. In general $n\geq2$ and when $f\in L^q$, $H(x,p)\sim |p|^\gamma$, $\gamma>p-1$, one needs the restrictions $q>\frac{n(\gamma-(p-1))}{\gamma}$ and $p\geq2$ \cite{CGL}, but maximal regularity estimates remain open when $p\leq 2$. The parabolic quasilinear case is largely unknown: maximal first- and second-order regularity hold in a quantitative form by the results in \cite{GoffiLeonoriIUMJ} if $\gamma$ satisfies
\[
0\leq \gamma   <  \max\left\{\frac{p}2,p-1-\frac{p-2}{n+2} \right\},
\]
which can be regarded as a sub-linear growth condition (with respect to the parabolic evolution $\partial_t u-\Delta_p u$). Maximal regularity estimates beyond this threshold are completely open.
\end{opp}

\section{Survey on the classical regularity theory for second order Mean Field Games}
\subsection{Stationary MFGs}
We start by considering a classical stationary second order MFG system with unknown $(u,\lambda,m)$
\begin{equation}\label{MFG1}
\begin{cases}
-\Delta u+H(x,\nabla u)+\lambda=f(m)&\text{ in }\T^n,\\
-\Delta m-\mathrm{div}(D_pH(x,\nabla u)m)=0&\text{ in }\T^n,\\
m>0,\quad \int_{\T^n}m\,dxdy=1.
\end{cases}
\end{equation}
We address three model cases, all of them with $H$ having superlinear growth in the gradient, depending on the behavior of the coupling $f(m)$. We will explore the cases $f(m)=m^\alpha$, $f(m)=-m^\alpha$ and $f(m)=\log m$. We do not consider here other couplings such as those presenting singularities \cite{Gomesbook}, those with aggregation of Choquard-type \cite{BC}, and the case of non-separable Hamiltonians. 

\paragraph{\textit{Stationary defocusing case}} Consider $f(m)=\sigma_f m^\alpha$, $\alpha>0$, $\sigma_f>0$ and $H(x,p)\sim |p|^\gamma$, $\gamma>1$.
\begin{itemize}
\item Existence of smooth solutions for stationary problems of the form \eqref{MFG1} was discussed by P.L. Lions in his seminars around 2010-2012 \cite{Lions2009,LioMinnesota} for $\gamma=2$ and $\alpha<\frac{2}{n-2}$ and, when $n=2$, for any $\alpha>0$. Existence for any $\alpha>0$ was proved for purely quadratic Hamiltonians via the Hopf-Cole change of variable  and the De Giorgi-Nash-Moser theory \cite{NHM} (note that the result therein appeared in the parabolic setting). 
\item Existence of classical solutions also holds for any $\alpha>0$ when $H$ is superlinear and ``slowly increasing'', namely $1<\gamma<\frac{n}{n-1}$, cf. \cite{CirantCPDE}. This is due to the fact that the drift $-D_pH(x,p)\in L^r$, $r>n$, which implies the boundedness of solutions to the second equation of the system via the classical linear theory in \cite{LUbook}.
\item The general case was addressed for problems with periodic boundary conditions in \cite{PimentelIUMJ} when $\alpha<\gamma'/n$, and then in \cite{CirantCPDE}, under the assumption
\[
\alpha<\frac{\gamma'}{n-\gamma'}.
\]
\item The range of $\alpha$ was later boosted via maximal regularity for viscous Hamilton-Jacobi equations (cf. \cite{CGARMA,GoffiPediconi,CirantVerzini,GoffiJFA}) in \cite{GoffiPediconi}: smoothness holds in convex domains with Neumann boundary conditions or compact manifolds without boundary under the condition
\[
\alpha<\frac{\gamma'}{n-2-\gamma'}.
\]
\end{itemize}

\begin{opp}
We expect the existence of classical solutions for any $\alpha>0$ and $\gamma>1$ when $f(m)=\sigma_f m^\alpha$, $\sigma_f$ small. This agrees with the PDE principle that ``smallness'' implies ``smoothness''.
\end{opp}

\begin{opp}
We expect the existence of classical solutions for any $\alpha$ when $f(m)=m^\alpha$ and a large zero-th order coefficient (i.e. $\lambda u$, $\lambda>0$ large). This is motivated by the validity of the smoothness of solutions for short time horizons in the parabolic case \cite{CT,CGSIMA}. Results for related semilinear and fully nonlinear PDEs, along with certain systems,  appeared in \cite{L82book,EvansLions,LionsSouganidis}.
\end{opp}

\begin{opp}
Existence of classical solutions for any $\alpha>0$ and any $\gamma>1$ in compact domains without boundary is open without restrictions on $\gamma$ and $\alpha$; cf. Remark 12 in \cite{NoteCIME}. The best available result for $n\geq3$ in terms of the restrictions of $\alpha$ and $\gamma$ in this setting is \cite{GoffiPediconi} when
\[
\alpha<\frac{\gamma'}{n-2-\gamma'}.
\]
We will show in Theorem \ref{main3} that the smoothness of solutions holds when $\gamma=2$ and $n=2$ without restrictions on $\alpha$.
\end{opp}

\begin{opp}
Existence of classical solutions remains mostly open for MFGs with controlled diffusion (i.e. driven by fully nonlinear or linear nondivergence form operators) and power-like local couplings. The above results cover the cases of constant coefficients and Lipschitz diffusions. We do not know what happens neither for linear uniformly elliptic/parabolic diffusions in divergence and nondivergence form  with bounded and measurable entries nor for fully nonlinear second order terms. Also, the existence of smooth solutions for degenerate diffusions, general power-growth Hamiltonians and local couplings remains largely open.
\end{opp}

\paragraph{\textit{Stationary focusing case}} Consider $f(m)=-\sigma_f m^\alpha$, $\alpha>0$, $\sigma_f>0$ and $H(x,p)\sim |p|^\gamma$, $\gamma>1$.
\begin{itemize}
\item Existence of smooth solutions holds provided that
\[
\alpha<\frac{\gamma'}{n}.
\]
When $\sigma_f$ is small, there exist classical solutions when
\[
\frac{\gamma'}{n}\leq \alpha<\frac{\gamma'}{n-\gamma'}.
\]
See \cite[Theorem 1.1]{CirantCPDE}.
\item In the whole space $\R^n$ there are no smooth solutions when $\alpha>\frac{\gamma'}{n-\gamma'}$, cf. \cite[Theorem 1.3]{CirantCPDE}, but the critical value $\bar \alpha=\frac{\gamma'}{n-\gamma'}$ was not addressed. If the coupling presents a ``small'' space-dependent potential, existence holds up to $\bar \alpha$ by the work \cite{CirantCosenzaVerzini}.
\end{itemize}

\paragraph{\textit{Logarithmic coupling}} Take now $f(m)=\log m$, and $H(p)\sim |p|^\gamma$.
\begin{itemize}
\item Smoothness holds under the assumption 
\[
1<\gamma<2+\frac{1}{n-1},
\]
see \cite{PimentelIUMJ}.
\end{itemize}

\begin{opp}
Smoothness of solutions to \eqref{MFG1} for logarithmic couplings beyond the threshold $\gamma=2+\frac{1}{n-1}$ remains open.
\end{opp}
\subsection{Parabolic MFGs}
Consider now a model parabolic MFG system
\begin{equation}\label{MFG2}
\begin{cases}
-\partial_tu-\Delta u+H(x,\nabla u)=g(m)&\text{ in }\T^n\times(0,T),\\
\partial_t m-\Delta m-\mathrm{div}(D_pH(x,\nabla u)m)=0&\text{ in }\T^n\times(0,T),\\
u(x,T)=u_T(x),\ m(x,0)=m_0(x)&\text{ in }\T^n,
\end{cases}
\end{equation}
where we assume the initial and terminal data $u_T,m_0\in C^{2,\alpha}(\T^n)$.
 
\paragraph{\textit{Parabolic defocusing case}} Consider $g(m)=\sigma_g m^\alpha$, $\alpha>0$, $\sigma_g>0$, and $H(x,p)\sim |p|^\gamma$, $\gamma>1$.
\begin{itemize}
\item Solutions stay always smooth, i.e. in the parabolic H\"older class $C^{2+\beta,1+\beta/2}(\T^n\times(0,T))$, for any $\gamma>1$ and any $\alpha>0$ for short time horizons \cite{CT,CGSIMA} via perturbative methods based on the semigroup theory and interpolation theory in Banach spaces.
\item Solutions are always smooth for weakly coercive Hamiltonians satisfying
\[
1<\gamma<\frac{n+2}{n+1},
\]
see \cite{LioMinnesota,CT} and Remark 12 of \cite{NoteCIME}. In this regime the drift of the Fokker-Planck equations $-D_pH(x,p)\in L^r_{x,t}$, $r>n+2$, which ensures the boundedness and even the H\"older continuity of solutions via the classical theory of linear parabolic equations \cite{LSU}. Then bootstrapping arguments as detailed in the introduction of the paper yield classical regularity. 

\item For $H(p)=|p|^2$, the paper \cite{NHM} provides the smoothness of solutions for any $\alpha$ and $n\geq1$. The proof however applies only to purely quadratic Hamiltonians. 
\item When $H$ has natural gradient growth and $n=1,2$, one has $m^\alpha\in L^q$ for any finite $q>1$ by \cite[Lemma 5.2 with $\mu=2$]{CGannpde} and this implies the smoothness of solutions for any $\alpha>0$.

\item For $H\sim |p|^\gamma$, $\frac{n+2}{n+1}<\gamma<2$ \cite{Gomessub} proved the existence of an implicit exponent $\alpha_{\gamma,n}>\frac{2}{n-2}$ such that there is smoothness for any $0<\alpha<\alpha_{\gamma,n}$.
\item For $H\sim |p|^{2+\mu}$, $\mu\in(0,1)$, \cite{Gomessup} proved the existence of smooth solutions when 
\[
\alpha<\frac{2}{n(1+\mu)-2}.
\]
\item The best known result when $n\geq3$ in terms of the range of $\alpha$ and $\gamma$ for time-dependent defocusing MFGs appeared in \cite{CGannpde} via the parabolic maximal regularity of Hamilton-Jacobi equations and the adjoint method. Therein it is proved the smoothness of solutions when
\begin{equation}\label{bestpar}
\alpha < 
\begin{cases}
\frac{\gamma'}{n-2}\frac{n}{(n+2-\gamma')} & \text{if $\frac{n+2}{n+1} < \gamma \le 2$} \medskip \\ 
\frac{2}{n(\gamma-1) - 2} & \text{if $\gamma \ge 2$}.
\end{cases}
\end{equation}
This result extends \cite{Gomessup} to any $\gamma>2$ with the same upper bound for $\alpha$. In the subquadratic case it provided the first explicit exponent. 
\item When $\gamma>2$ the threshold for $\alpha$ in \eqref{bestpar} can be further improved. Combining \cite[Lemma 5.2]{CGannpde} and the maximal $L^q$-regularity in Theorem 1.1 of \cite{CirantJFA} one can prove the smoothness of solutions when 
\[
\alpha<\frac{\gamma'[(n+2)(\gamma-1)-2]}{(n+2-\gamma')[n(\gamma-1)-2]}\text{ and }\gamma>2.
\]
Note that
\[
\frac{\gamma'[(n+2)(\gamma-1)-2]}{(n+2-\gamma')[n(\gamma-1)-2]}>\frac{2}{n(\gamma-1) - 2}\text{ when }\gamma>2.
\]

\end{itemize}

\begin{opp}
We expect the smoothness of solutions for defocusing MFGs with coupling $f(m)=m^\alpha$, $H$ having natural growth and $n\geq3$ for any $\alpha>0$, but this remains open.
\end{opp}

\begin{opp}
The smoothness of solutions for general Hamiltonians $H(x,p)\sim |p|^\gamma$ and any $\alpha>0$ remains unsettled.
\end{opp}

\begin{opp}
Extensions to parabolic Bellman systems with mean field dependence are possible and largely unexplored; see \cite{BensoussanAMO,BensoussanChinese}.
\end{opp}

\begin{opp}
Smoothness of solutions when the coupling appears also on the terminal data, i.e. $u_T(x)=G(x,m(T))$, is unknown.
\end{opp}

\paragraph{\textit{Parabolic focusing case}} Consider $g(m)=-\sigma_g m^\alpha$, $\alpha>0$, $\sigma_g>0$ and $H(x,p)\sim |p|^\gamma$, $\gamma>1$.
\begin{itemize}
\item The paper \cite{CGannpde} provided at this stage the best available result in terms of $\alpha$ and the growth $\gamma$, proving the smoothness of solutions when \begin{equation*}
\alpha < 
\begin{cases}
\frac{\gamma'}{n} & \text{if $\frac{n+2}{n+1} < \gamma \le 2$} \medskip \\ 
\frac{2}{(n+2)(\gamma-1)-2} & \text{if $\gamma \ge 2$}.
\end{cases}
\end{equation*}
Earlier results appeared in \cite{CT}, where it is proved that classical solutions exist when either $1<\gamma<\frac{n+2}{n+1}$ or under the constraints
\[
\frac{n+2}{n+1}\leq \gamma<2\text{ and }\alpha<\min\left\{\frac{\gamma'}{n},\frac{\gamma'-2}{n+2-\gamma'}\right\}.
\]
\item The more recent paper \cite{CirantGhilli} focused on the case $\alpha\geq\frac2n$, proving the existence of smooth solutions when $\sigma_g$ is small. For large $\sigma_g$ it is known that the existence fails for large time horizons in the whole space $\R^n$.
\end{itemize}

\paragraph{\textit{Parabolic logarithmic case}} When $g(m)=\log m$ the only available result appeared in \cite{GPlog}, where it is proved that smooth solutions exist when
\[
1<\gamma<\frac54.
\]
\begin{opp}
Smoothness of solutions to \eqref{MFG2} for logarithmic couplings beyond the threshold $\gamma=\frac54$ remains open. 
\end{opp}

\section{Applications to stationary second order Mean Field Games in two independent variables}
We consider
\begin{equation}\label{MFG2d}
\begin{cases}
-\Delta u+H(x,\nabla u)+\lambda=f(m)&\text{ in }M\subset\R^2,\\
-\Delta m-\mathrm{div}(D_pH(x,\nabla u)\ m)=0&\text{ in }M\subset\R^2,\\
m>0,\quad \int_Mm\,dxdy=1.
\end{cases}
\end{equation}
We prove the following smoothness result in compact 2D manifolds $M$ without boundary. We assume that $H \in C(M \times \R^2)$ is convex in the second variable, and that there exist constants $C_H>0$ such that
\begin{equation}\label{H}\tag{$H$}
C_H^{-1}|p|^{2}-C_H\leq H(x,p)\leq C_H(|p|^{2}+1)\ ,
\end{equation}
for every $x\in M$, $p\in\R^2$, $X\in\mathrm{Sym}_2$ ($\mathrm{Sym}_2$ being the space of $2\times 2$ symmetric matrices)
\begin{equation}\label{H2}\tag{$H2$}
\begin{gathered}
\mathrm{Tr}(D^2_{pp}H(x,p)X^2) \geq C_H^{-1}|X|^2 -C_H, \\
|D^2_{px}H(x,p)| \leq C_H(|p| + 1), \\
|D^2_{xx}H(x,p)| \leq C_H (|p|^2 + 1).
\end{gathered}
\end{equation}
Concerning the coupling, we will assume here that $f:M\times[0,\infty)\to\R$ is of class $C^1$, and that there exist $\alpha>0$ and $C_f > 0$ such that
\begin{equation}\label{fde}\tag{$M^+$}
C^{-1}_f m^{\alpha-1}  \le f'(m) \le C_f(m^{\alpha-1} + 1) \qquad \forall m \ge 0.
\end{equation}
This implies that $f(\cdot)$ is monotone increasing and bounded from above and below by power-like functions of type $m^\alpha$.
\begin{thm}\label{main3}
Let $n=2$ and assume \eqref{H}-\eqref{H2} and \eqref{fde}. Then there exists a unique smooth solution, i.e. $(u,\lambda,m)\in C^{2,\tilde\beta}(M)\times\R\times C^{2,\tilde\beta}(M)$, of \eqref{MFG2d} for any $\alpha>0$ and some universal $\tilde \beta\in(0,1)$.
\end{thm}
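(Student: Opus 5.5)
The argument is a priori estimates along a continuity or Leray–Schauder scheme, following the bootstrap chain sketched in the Introduction; existence is then standard. Uniqueness follows from the Lasry–Lions monotonicity argument, since $f$ is increasing by \eqref{fde} and $H$ is convex in $p$. For existence, I would embed \eqref{MFG2d} in a family obtained by regularizing or truncating the coupling, e.g. $f_k(m)=f(\min\{m,k\})$, or by a continuity parameter $\tau\in[0,1]$ multiplying $f$. For the regularized problems smooth solutions exist by the known results (for instance \cite{CirantCPDE} for bounded couplings). It then suffices to prove $C^{2,\tilde\beta}$ bounds on $(u,\lambda,m)$ that do not depend on the regularization.

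\emph{Step 1 (energy and second-order estimates).} Testing the HJ equation against $m-1$ and the FP equation against $u$, and subtracting, gives the classical bound
\[
\int_M f(m)m+\int_M |\nabla u|^2 m\le C .
\]
The bound on $\lambda$ comes from integrating the HJ equation and using \eqref{H}. Next I would use the second-order estimates of \cite{Lions2009,Gomesbook,GoffiPediconi}: differentiate the HJ equation twice, test against $m$, and use \eqref{H2} together with \eqref{fde}. This yields
\[
\int_M f'(m)|\nabla m|^2\le C ,
\]
hence $\int_M |\nabla m^{(\alpha+1)/2}|^2\le C$. Since $m^{(\alpha+1)/2}$ is bounded in $L^2$ by Step 1 and $f(m)m\gtrsim m^{\alpha+1}$, we get $m^{(\alpha+1)/2}\in H^1(M)$. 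The 2D Sobolev (Trudinger) embedding $H^1\hookrightarrow L^q$ for all $q<\infty$ then gives $m\in L^q$ for every finite $q$, and in particular $f(m)\in L^2$ with a bound independent of the regularization.

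\emph{Step 2 (maximal regularity).} Write the HJ equation as $-\Delta u+H(x,\nabla u)=f(m)-\lambda\in L^2$. For $H=|p|^2$ one can apply Theorem \ref{main1} directly. For general $H$ satisfying \eqref{H} I would instead invoke the $W^{2,2}$ maximal regularity of \cite{GoffiCCM,GoffiJFA}, which covers $q=2$ for natural growth. This is where $n=2$ is essential, since $q=2\ge n/2$ holds without any restriction on $\alpha$. The result is $D^2u\in L^2$ and $|\nabla u|^2\in L^2$, so $\nabla u\in L^4$. Better still, $\nabla u\in W^{1,2}\subset L^r$ for every $r<\infty$, and therefore the drift $D_pH(x,\nabla u)$, which has linear growth, lies in $L^r$ for some $r>2=n$.

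\emph{Step 3 (bootstrap).} The FP equation now has its drift in $L^r$ with $r>n$. The linear De Giorgi–Nash–Moser theory \cite{LUbook} then gives $m\in C^{\beta}$, hence $f(m)\in C^{\beta'}$. Because $u\in W^{2,2}\subset C^{0,\beta}$ and $\nabla u\in L^r$, standard $W^{2,p}$ and then Schauder theory for the HJ equation with Hölder right-hand side gives $u\in C^{2,\tilde\beta}$. In detail, $\nabla u\in L^r$ for all $r$ puts the right side in $L^p$, so $u\in W^{2,p}$ with $p>2$ and $\nabla u$ is Hölder; Schauder then applies. It follows that $D_pH(x,\nabla u)\in C^{1,\tilde\beta}$, assuming enough smoothness of $H$, which I would take as implicit in ``smooth''. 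Writing the FP equation in nondivergence form, Schauder theory gives $m\in C^{2,\tilde\beta}$, and positivity of $m$ follows from the Harnack inequality.

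\emph{Main obstacle.} The delicate step is Step 1, obtaining the estimate on $\int f'(m)|\nabla m|^2$ uniformly along the approximation. This requires checking that the truncated coupling still satisfies the structural inequality used in the second-order argument, namely that the Bernstein-type computation tested against $m$ closes thanks to \eqref{H2}. I would first try the continuity-parameter version, with $f$ replaced by $\tau f$, where \eqref{fde} is preserved up to constants for $\tau$ bounded away from $0$. Near $\tau=0$ I would rely on the trivial solution $(0,\min_p H\text{-average},1)$ being nondegenerate. Once Step 1 holds with uniform constants, Steps 2–3 are linear and routine.
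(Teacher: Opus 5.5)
Your proposal is correct and follows the paper's proof essentially step by step: the second-order estimates and the 2D Sobolev embedding give $m^\alpha\in L^2$, maximal $W^{2,2}$-regularity (Theorem~\ref{main1}, or \cite{GoffiJFA} for general $H$) puts the drift in $L^r$ with $r>2$, and the De Giorgi--Nash--Moser/Schauder bootstrap concludes, with existence obtained by approximation (the paper mollifies the coupling rather than truncating it or using a continuity parameter). The obstacle you flag near $\tau=0$ does not actually arise, because the bounds $\tau\int_M m^{\alpha+1}+\tau\int_M|\nabla m^{(\alpha+1)/2}|^2\le C$ are uniform in $\tau$ and already give $\tau\|m^\alpha\|_{L^q(M)}\le C_q\tau^{1/(\alpha+1)}$ for every finite $q$; this matters, since the constant-density ``trivial solution'' you want to fall back on does not solve the $\tau=0$ (ergodic) problem when $H$ depends on $x$.
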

\begin{rem}
The result for \eqref{MFG2d} can be obtained via the Hopf-Cole change of variable and the De Giorgi-Nash-Moser theory when $H(x,\nabla u)=|\nabla u|^2$, following the ideas in \cite{NHM}. The next proof provide a different approach based on nonlinear maximal regularity for HJ equations without decoupling the system, and it provides a stationary counterpart of Theorem 1.4 in \cite{CGannpde} when $n=2$.
\end{rem}
\begin{proof}
By classical second order estimates, i.e. testing the HJ equation against $\Delta m$ and the Fokker-Planck equation against $\Delta u $, cf. \cite{Lions2009,Gomesbook,GoffiPediconi}, we find using \eqref{H2} and \eqref{fde}
\[
\int_M |D^2u|^2m+c_\alpha\int_{M}|\nabla m^{\frac{\alpha+1}{2}}|^2\leq C,
\]
which implies by the 2D Sobolev inequality $m^\alpha\in L^q$ for any finite $q\in(1,\infty)$. In particular, $m^\alpha\in L^2(M)$, which implies by either Theorem \ref{main1} (if $H$ is purely quadratic) or \cite[Remark 3.4, Theorem 4.4 and Remark 4.9]{GoffiJFA} (for a more general $H$) that $u\in W^{2,2}(M)$ and $|\nabla u|^2\in L^2(M)$. This implies that $b(x)=-D_pH(x,\nabla u)\in L^r(M)$, $r>2$. Classical results for linear equations with $L^r$ coefficients \cite[Theorem 5.1 and Theorem 5.2]{LUbook} imply $m\in C^{\beta}(M)$ for some $\beta\in(0,1)$, which in turn gives $m^\alpha\in C^{\tilde \beta}(M)$ for a possibly different $\tilde \beta\in(0,1)$. Then by the Schauder estimates for equations with natural growth \cite[p. 379-380]{LUbook} we find $u\in C^{2,\tilde\beta}(M)$. This implies immediately $b(x)\in C^{1,\tilde\beta}(M)$, which boosts the regularity of the Fokker-Planck equation, giving $m\in C^{2,\tilde\beta}(M)$ after bootstrapping and applying again the Schauder estimates for linear equations in nondivergence form. \\
Once a priori estimates on the second derivatives are established, one can set up a fixed-point method or a regularization procedure which consists in replacing $f(m)$ with $f_\eps(m)=(m\star \chi_\eps)\star\chi_\eps$, where $\chi_\eps$ is a sequence of standard symmetric mollifiers. The existence of solutions with this regularized coupling is standard, cf. \cite{Gomesbook}, and since the previous a priori bounds do not depend on $\eps$, one can pass to the limit and find a solution of \eqref{MFG2d}.
\end{proof}

\begin{opp}
We believe that Theorem \ref{main3}, that holds under natural growth conditions on $H$ and for any $\alpha>0$, can be extended to any $n\geq3$, but this remains open for general natural growth conditions.
\end{opp}


\end{document}